\newcommand{\BigO}[1]{\ensuremath{\operatorname{O}\bigl(#1\bigr)}}
\newtheorem{theorem}{Theorem}[section]
\newtheorem{lemma}[theorem]{Lemma}
\newtheorem{corollary}[theorem]{Corollary}
\title{Representing numbers as the sum of squares and powers in the ring $\mathbb{Z}_n$}
\author{Rob Burns}
\begin{document}
\maketitle
\begin{abstract}
We examine the representation of numbers as the sum of two squares in $\mathbb{Z}_n$ for a general integer $n$. Using this information we make some comments about the density of positive integers which can be represented as the sum of two squares and powers of $2$ in $\mathbb{N}$.
\end{abstract}

\section{Introduction}
\label{intro}
The problem of determining which numbers can be represented as the sum of two squares in $\mathbb{N}$ was solved to a large extent by Fermat who stated that an odd prime $p$ can be represented as the sum of two squares of integers if and only if \mbox{$\,\, p \equiv 1 \mod 4$}. Euler later provided the proof. More generally, an positive integer can be written as a sum of two squares if and only if its prime decomposition contains no prime congruent to \mbox{$\, 3  \mod 4$} raised to an odd power. A related problem is to determine the asymptotic density of the positive integers which can be represented as the sum of two squares. Landau \cite{Lan1908} and Ramanujan independently showed that, for large $x$, the number of positive integers less than $x$ that can be expressed as the sum of two squares is approximately \mbox{$\frac{K \, x}{\ln x}$} where $K$ is called the Landau-Ramanujan constant.

This paper looks at the representation of numbers as the sum of two squares in $\mathbb{Z}_n$. We allow $0$ as one (or both) of the squares in the representation. As is the case with $\mathbb{N}$, when dealing with $\mathbb{Z}_n$ we can determine exactly which numbers can and which numbers cannot be given such a representation. We can also derive an exact formula for the size of the set of numbers which can be represented in this way in $\mathbb{Z}_n$. These results are presented in section \ref{sumsofsquares}. Some of the results were previously established in \cite{Harrington:2014aa}.

One of the reasons for reducing the problem to the ring $\mathbb{Z}_n$ is that the results from $\mathbb{Z}_n$ can provide information about what happens in $\mathbb{N}$. We provide an example of this in section~\ref{powers} where we deal with the problem discussed in \cite{Crocker:2008id} and \cite{Platt:2016aa} of representing numbers as the sum of two squares and powers of $2$. We will use an argument from \cite{Platt:2016aa} to give a lower bound for the density of numbers which cannot be represented as the sum of two squares and up to one power of $2$. The argument uses the simple idea that if a number $m$ can be written as the sum of two squares in $\mathbb{N}$ then $m$ can be written as the sum of two squares modulo any positive integer. Conversely, if there is one positive integer $n$ such that $m$ cannot be written as the sum of two squares in $\mathbb{Z}_n$ then $m$ cannot be expressed as the sum of two squares in $\mathbb{N}$.

\bigskip

\section{Some preliminary results}
\label{prelim}

We first provide some notation for the sets of numbers we will be using. Let $n \in \mathbb{N}$. We define the sets $S_n$ and $N_n$ by
$$
\mbox{$S_n := \{\, m \in \mathbb{Z}_n : m = x^2 + y^2$ for some $x, y \in \mathbb{Z}_n \, \}$ }
$$
and
$$
N_n := \mathbb{Z}_n \, \backslash \, S_n.
$$
We note that Harrington, Jones and Lamarche \cite{Harrington:2014aa} use $S^0_n$ instead of $S_n$ for the set of numbers in $\mathbb{Z}_n$ which can be written as a sum of up to two (possibly zero) squares $\mod n$. If $A$ is a subset of $\mathbb{Z}_n$ then define $\abs{A}$ to be the number of elements in $A$ and  define the density $r(A)$ of $A$ by
$$
r(A) := \frac{\abs{A}}{\abs{\mathbb{Z}_n}} = \frac{\abs{A}}{n}.
$$
The rest of the results in this section are not new but have been included for completeness as they are required in the next sections.

\bigskip
\begin{lemma}
\label{monoid}
The set $S_n$ is a monoid under the operation of multiplication for each $n \in \mathbb{N}$.
\end{lemma}
\bigskip
\begin{proof}
Firstly, $1 \in S_n$ as $ 1= 1^2 + 0^2$. Secondly, $S_n$ is closed under multiplication because of the Brahmagupta - Fibonacci identity.
\end{proof}

\bigskip
\begin{lemma}
\label{relprime}
Let $m, n \in \mathbb{N}$ be relatively prime. Then 
$$
\mbox{$x \in S_{mn} \,\, $ if and only if $ \,\,  x \mod m \in S_m \, $ and $ \, x \mod n \in S_n$}.
$$
In addition
\begin{equation}
\label{abssmn}
\abs{S_{mn}} = \abs{S_m} \times \abs{S_n}
\end{equation}
and
\begin{equation}
\label{absnmn}
\abs{N_{mn}} = n \times \abs{N_m} + m \times \abs{N_n} - \abs{N_m} \times \abs{N_n}.
\end{equation}
\end{lemma}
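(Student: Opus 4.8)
The plan is to apply the Chinese Remainder Theorem. Since $m$ and $n$ are relatively prime, the ring isomorphism $\mathbb{Z}_{mn} \cong \mathbb{Z}_m \times \mathbb{Z}_n$ sends each $x \in \mathbb{Z}_{mn}$ to the pair $(x \bmod m, x \bmod n)$, and this map is a bijection that respects both addition and multiplication. The key observation is that $x = a^2 + b^2$ in $\mathbb{Z}_{mn}$ holds if and only if the same relation holds componentwise in each factor ring.

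For the biconditional, I would argue both directions explicitly. If $x \in S_{mn}$, write $x = a^2 + b^2$ for some $a, b \in \mathbb{Z}_{mn}$; reducing this identity modulo $m$ and modulo $n$ shows immediately that $x \bmod m \in S_m$ and $x \bmod n \in S_n$. For the converse, suppose $x \bmod m = a_1^2 + b_1^2$ in $\mathbb{Z}_m$ and $x \bmod n = a_2^2 + b_2^2$ in $\mathbb{Z}_n$. By the surjectivity half of CRT, I can choose $a, b \in \mathbb{Z}_{mn}$ with $a \equiv a_1, b \equiv b_1 \pmod{m}$ and $a \equiv a_2, b \equiv b_2 \pmod{n}$. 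Then $a^2 + b^2$ agrees with $x$ in both components, so $a^2 + b^2 = x$ in $\mathbb{Z}_{mn}$, giving $x \in S_{mn}$. This establishes the characterization of $S_{mn}$.

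Equation \eqref{abssmn} then follows by a counting argument: the biconditional says the bijection $\mathbb{Z}_{mn} \to \mathbb{Z}_m \times \mathbb{Z}_n$ restricts to a bijection between $S_{mn}$ and $S_m \times S_n$, so $\abs{S_{mn}} = \abs{S_m} \times \abs{S_n}$. For equation \eqref{absnmn}, I would pass to complements. Writing $\abs{S_m} = m - \abs{N_m}$ and similarly for $n$ and $mn$, I substitute into \eqref{abssmn} to obtain $mn - \abs{N_{mn}} = (m - \abs{N_m})(n - \abs{N_n})$. Expanding the right-hand side and solving for $\abs{N_{mn}}$ yields $\abs{N_{mn}} = n\abs{N_m} + m\abs{N_n} - \abs{N_m}\abs{N_n}$ after the $mn$ terms cancel.

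I do not expect a genuine obstacle here, since the result is essentially a transcription of CRT together with elementary counting; the one point requiring care is verifying that the sum-of-two-squares predicate is genuinely preserved by the isomorphism in both directions, which relies on the fact that the CRT map is a ring homomorphism (so it respects squaring and addition) and is surjective (so the lifted $a, b$ actually exist). Everything after the structural characterization is routine algebra on cardinalities.
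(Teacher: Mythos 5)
Your proposal is correct and follows essentially the same route as the paper: the biconditional via the Chinese Remainder Theorem, equation \eqref{abssmn} by the induced bijection $S_{mn} \leftrightarrow S_m \times S_n$, and equation \eqref{absnmn} by substituting $\abs{N_m} = m - \abs{S_m}$ and $\abs{N_n} = n - \abs{S_n}$ into \eqref{abssmn}, which is the first of the two derivations the paper sketches. You simply spell out the CRT lifting argument that the paper leaves implicit, which is a reasonable elaboration rather than a different method.
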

\bigskip
\begin{proof}
The first statement follows from the Chinese remainder theorem. Equation (\ref{abssmn}) follows. Equation (\ref{absnmn}) is then a consequence of the fact that 
$$
\mbox{$\abs{N_m} = m - \abs{S_m}$ \,\, and \,\, $\abs{N_n} = n - \abs{S_n}$.} 
$$
Alternatively,
$$
\mbox{$x \in N_{mn}$ \, \, if and only if \, \, $x \mod m \in N_{m}$ \, or $x \mod n \in N_{n}$.}
$$
Now
$$
\abs{ \{ \, x \in \mathbb{Z}_{mn} : x \mod m \in N_m \, \} } = n \cdot \abs{N_m}
$$
and
\begin{equation*}
\abs{ x \in \mathbb{Z}_{mn} : x \mod m \in N_m \,\, \text{and}  \,\, x \mod n \in N_n}  = \abs{N_m} \cdot \abs{N_n}
\end{equation*}
by the Chinese remainder theorem. Equation (\ref{absnmn}) follows from the standard set theory formula for the size of the union of two sets.
\end{proof}  

\bigskip

\begin{corollary}
\label{cor}
Let $m_1, m_2, ... , m_k \in \mathbb{N} \, \, $ be relatively prime and $ \, \, n = \prod_1^k m_i$. Then 
\begin{equation}
\label{rmns}
r(S_n) = \prod_1^k r(S_{m_i})
\end{equation}
and
\begin{equation}
\label{rmnn}
r(N_n) = 1 - \prod_1^k (1 - r(N_{m_i}) \, ).
\end{equation}
\end{corollary}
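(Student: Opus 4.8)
The plan is to prove Corollary \ref{cor} by induction on $k$, using the two-factor case from Lemma \ref{relprime} as both the base case and the inductive step. The key observation is that if $m_1, m_2, \ldots, m_k$ are pairwise relatively prime, then any one of them is relatively prime to the product of the others, so Lemma \ref{relprime} applies repeatedly.

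For equation (\ref{rmns}), I would first establish the base case $k = 1$, which is trivial since $n = m_1$ gives $r(S_n) = r(S_{m_1})$. For the inductive step, suppose the formula holds for $k-1$ factors and set $N = \prod_1^{k-1} m_i$. Since $m_k$ is relatively prime to $N$, equation (\ref{abssmn}) of Lemma \ref{relprime} gives $\abs{S_{N m_k}} = \abs{S_N} \times \abs{S_{m_k}}$. Dividing both sides by $n = N m_k$ yields $r(S_n) = r(S_N) \, r(S_{m_k})$, and applying the inductive hypothesis to $r(S_N)$ completes the step. The only point requiring care is verifying that $\gcd(m_k, N) = 1$, which follows immediately from pairwise relative primality.

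For equation (\ref{rmnn}), I would use the relation $r(N_m) = 1 - r(S_m)$, valid for every modulus since $\abs{N_m} = m - \abs{S_m}$ by definition of $N_m$. Then $1 - \prod_1^k (1 - r(N_{m_i})) = 1 - \prod_1^k r(S_{m_i}) = 1 - r(S_n) = r(N_n)$, where the middle equality is exactly (\ref{rmns}). Thus (\ref{rmnn}) is a direct algebraic consequence of (\ref{rmns}) rather than requiring a separate induction.

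I do not anticipate a genuine obstacle here, as the corollary is essentially a multiplicative bookkeeping statement that packages the two-factor multiplicativity of Lemma \ref{relprime} into a clean product form. The main thing to be careful about is the distinction between pairwise relative primality and the weaker condition that the whole collection shares no common factor; the repeated application of Lemma \ref{relprime} genuinely requires the pairwise hypothesis, which is what the statement assumes.
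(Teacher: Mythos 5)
Your proof is correct and follows essentially the same route as the paper: both reduce everything to the two-factor case of Lemma \ref{relprime} and then induct on the number of factors, relying on the fact that pairwise coprimality makes each $m_k$ coprime to the product of the rest. The only small difference is that you obtain (\ref{rmnn}) algebraically from (\ref{rmns}) via the complementation identity $r(N_m) = 1 - r(S_m)$ rather than running a second induction on equation (\ref{absnmn}); this is a valid and slightly cleaner way to handle the second equation.
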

\bigskip
\begin{proof}
When $k = 2$, equations (\ref{rmns}) and (\ref{rmnn})  are established by dividing equations (\ref{abssmn}) and (\ref{absnmn}) by $mn$. The general case follows by induction.
\end{proof}

\bigskip

\section{Sums of two squares in $\mathbb{Z}_n$}
\label{sumsofsquares}

The natural approach to investigating $\mathbb{Z}_n$ is to see what happens when $n$ is a power of a prime and then build up to a general $n \in \mathbb{N}$ using the Chinese remainder theorem and other devices. The behaviour of sums of squares in $\mathbb{Z}_{p^k}$ for $p$ prime and $k \geq 1$ depends on whether $p = 2$, $p \equiv 1 \mod 4$ or $p \equiv 3 \mod 4$. We deal with each case separately.

\bigskip

\begin{theorem}
\label{2k}
Let $k \in \mathbb{N}$, $k \geq 2$. Then
$$
S_{2^k} = \{ 0 \} \cup \{ t \cdot 2^s : s \in \mathbb{N}, \, \, t \equiv 1 \mod 4 \}
$$
and
$$
N_{2^k} = \{ t \cdot 2^s : s \in \mathbb{N}, \, \, t \equiv 3 \mod 4 \}.
$$
So
$$
\abs{S_{2^k}} = 2^{k-1} + 1 \, \, \, \text{and} \, \, \, \abs{N_{2^k}} = 2^{k-1} - 1.
$$
\end{theorem}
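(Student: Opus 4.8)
The plan is to prove both displayed set identities simultaneously by sorting the nonzero residues according to their $2$-adic structure, and then to read off the two cardinalities. Every nonzero $m \in \mathbb{Z}_{2^k}$ has a unique expression $m = 2^s t$ with $t$ odd and $0 \le s \le k-1$, and its odd part $t$ is either $\equiv 1$ or $\equiv 3 \pmod 4$; this splits $\mathbb{Z}_{2^k}\setminus\{0\}$ into two classes. I would show that the class with $t \equiv 1 \pmod 4$, together with $0$, is exactly $S_{2^k}$, while the class with $t\equiv 3 \pmod 4$ is exactly $N_{2^k}$. The cardinality count is then a short geometric sum: for each $s$ with $0\le s\le k-2$ the residues $2^s t$ with $t$ odd split evenly into $2^{k-s-2}$ residues in each class of $t \bmod 4$, and summing $\sum_{j=0}^{k-2}2^{j}=2^{k-1}-1$ gives $2^{k-1}-1$ residues in each class, the boundary value $s=k-1$ contributing only the single residue $2^{k-1}$ (forced $t=1$) to the $\equiv 1$ class. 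Hence $|N_{2^k}|=2^{k-1}-1$ and, adjoining $0$, $|S_{2^k}|=(2^{k-1}-1)+1+1=2^{k-1}+1$.

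For the non-membership half I would first establish the underlying integer statement: if a positive integer $X$ is a sum of two integer squares, then the odd part of $X$ is $\equiv 1 \pmod 4$. Writing $a = v_2(x)$, $b=v_2(y)$ and using that an odd square is $\equiv 1 \pmod 8$ while an even square has even $2$-adic valuation, the cases $a<b$, $a>b$ and $a=b$ (with the degenerate case $x=0$ or $y=0$) each yield directly that the odd part of $x^2+y^2$ is $\equiv 1 \pmod 4$; in the case $a=b$ one uses $x_1^2+y_1^2\equiv 2\pmod 8$ for odd $x_1,y_1$. To transfer this to $\mathbb{Z}_{2^k}$, suppose $m = 2^s t$ with $t\equiv 3\pmod 4$ were a sum of two squares modulo $2^k$; lifting to integers gives $x^2+y^2 = 2^s t + 2^k c$ for some $c\ge 0$, and since $t\equiv 3\pmod 4$ forces $s\le k-2$ (so $k-s\ge 2$), the right-hand side has $2$-adic valuation $s$ and odd part $\equiv t\equiv 3\pmod 4$, contradicting the integer statement. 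Hence every residue with odd part $\equiv 3\pmod 4$ lies in $N_{2^k}$.

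For the membership half I would exploit that $S_{2^k}$ is a monoid under multiplication (Lemma \ref{monoid}). The identities $2 = 1^2+1^2$, $5 = 1^2+2^2$ and $1 = 1^2 + 0^2$ show $2,5\in S_{2^k}$, so by closure every power $2^s$ and every power of $5$ lies in $S_{2^k}$. Here I would invoke the standard structure of the unit group: for $k\ge 3$ one has $(\mathbb{Z}/2^k)^{*} = \langle -1\rangle\times\langle 5\rangle$ with $\langle 5\rangle$ of order $2^{k-2}$, and since every power of $5$ is $\equiv 1\pmod 4$ while there are exactly $2^{k-2}$ units that are $\equiv 1 \pmod 4$, the subgroup $\langle 5\rangle$ is precisely the set of units $\equiv 1\pmod 4$ (the cases $k=2$ being immediate). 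Consequently any odd $t\equiv 1\pmod 4$ satisfies $t\equiv 5^{j}\pmod{2^k}$ and so $t\in S_{2^k}$; then $m = 2^s t = 2^s\cdot 5^{j}$ is a product of elements of $S_{2^k}$ and hence lies in $S_{2^k}$. Together with $0 = 0^2+0^2$, this places $\{0\}$ and every residue with odd part $\equiv 1\pmod 4$ in $S_{2^k}$, which completes the partition, the two displayed identities, and the cardinalities above.

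I expect the main obstacle to be the bookkeeping in the transfer between the integer statement and the ring $\mathbb{Z}_{2^k}$ — in particular ensuring the $2$-adic valuation does not ``wrap around'' modulo $2^k$, which is exactly why the boundary case $s=k-1$ must be isolated — together with pinning down that $\langle 5\rangle$ exhausts the units congruent to $1\pmod 4$. Everything else reduces to the elementary facts that odd squares are $\equiv 1\pmod 8$ and that $S_{2^k}$ is multiplicatively closed.
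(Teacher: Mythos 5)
Your proof is correct, but it takes a genuinely different route from the paper's in both halves. For the non-membership half, the paper runs a descent entirely inside $\mathbb{Z}_{2^k}$: using Gauss's criterion (an odd residue is a square mod $2^k$ if and only if it is $\equiv 1 \bmod 8$) it notes that a sum of two squares must be $\equiv 0,1,2,4,5 \pmod 8$, forces both squares to be divisible by $4$, divides by $4$, and iterates until it reaches either $t \equiv a^2+b^2 \pmod{2^{k-s}}$ or $2t \equiv a^2+b^2 \pmod{2^{k+1-s}}$, both impossible. You instead prove the integer-level statement that the odd part of any positive sum of two squares is $\equiv 1 \pmod 4$ (by casework on $2$-adic valuations) and then lift a hypothetical representation mod $2^k$ back to $\mathbb{Z}$; your observation that $t \equiv 3 \pmod 4$ forces $s \leq k-2$, so the valuation cannot ``wrap around,'' is exactly the point that makes the lift legitimate, and it plays the same role as the paper's ``we can assume $s \leq k-2$.'' For the membership half, the paper again uses Gauss's criterion directly: $t \equiv 1 \pmod 8$ is itself a square, $t \equiv 5 \pmod 8$ is $4$ plus a square, and monoid closure with powers of $2$ finishes; you instead invoke the unit-group structure $(\mathbb{Z}/2^k)^{*} = \langle -1\rangle \times \langle 5\rangle$ to identify the units $\equiv 1 \pmod 4$ with $\langle 5 \rangle$, writing every relevant residue as $2^s \cdot 5^j$. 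Both routes ultimately rest on the fact that odd squares are $\equiv 1 \pmod 8$, but the trade-off is real: the paper's argument is more self-contained (Gauss's criterion is its only input), while yours buys explicit multiplicative representations and a cleaner, reusable integer lemma that avoids the iterated descent; you also supply the cardinality count (a geometric sum over valuations, with the boundary residue $2^{k-1}$ isolated) that the paper merely asserts after its set description.
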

\begin{proof}
We will need the result established by Gauss \cite{Gauss1801} that an odd number is a square in $\mathbb{Z}_{2^k}$ if and only if it is $\equiv 1 \mod 8$. If $t \equiv 1 \mod 4$, then either $t \equiv 1 \mod 8$ in which case it is a square in $\mathbb{Z}_{2^k}$ or $t \equiv 5 \mod 8$ in which case $t - 4$ is a square in $\mathbb{Z}_{2^k}$ and $t = 4$ plus a square. In both cases $t \in S_{2^k}$. Now \mbox{$2^s \in S_{2^k}$} for any $s$ as it is either a square if $s$ is even or the sum of $2$ even powers of $2$ if $s$ is odd. Therefore, $t \cdot 2^s \in S_{2^k}$ for all $s$ when $t \equiv 1 \mod 4$ by lemma \ref{monoid}.
Next, let $t \equiv 3 \mod 4$ and suppose $n = t \cdot 2^s \in S_{2^k}$. We can assume that $s \leq k-2$. By Gauss' result, the sum of two squares in $\mathbb{Z}_{2^k}$ is $\equiv \{ \, 0, 1, 2, 4, 5 \, \} \mod 8$. Also $n \equiv \{ \, 0, 3, 4, 6, 7 \, \} \mod 8$. For equality to hold we must have both sides to be $\, \, \equiv 0 \mod 4$ which implies
$$
t \cdot 2^s \equiv x^2 + y^2 \mod 2^k
$$
where both $x^2$ and $y^2$ are $\, \equiv 0 \mod 4$. Then, dividing by $4$, we have
$$
t \cdot 2^{s-2} \equiv x_1^2 + y_1^2 \mod 2^{k-2}
$$
for some numbers $x_1$ and $y_1$. The same argument can be applied again and we descend to one of two possible outcomes. Either
$$
t \equiv a^2 + b^2 \mod 2^{k-s}
$$
or
$$
2 \cdot t \equiv a^2 + b^2 \mod 2^{k+1-s}
$$
for some $a$ and $b$. The first case is impossible as $2^{k-s} \geq 4$ and the sum of two squares $\mod 4$ cannot be $\equiv 3 \mod 4$. The second case is also impossible as $2^{k+1-s} \geq 8$ and the sum of two squares $\mod 8$ cannot be $\, \equiv 6 \mod 8$. Therefore $t \cdot 2^s \in N(2^k)$.
\end{proof}

\bigskip

\begin{theorem}
\label{1mod4}
Let $p$ be a prime with $p \equiv 1 \mod 4$ and $k \geq 1$. Then
$$
S_{p^k} = \mathbb{Z}_{p^k} \, \, \, \text{and} \, \, \, N_{p^k} = \emptyset .
$$
\end{theorem}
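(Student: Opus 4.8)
The plan is to show that when $p \equiv 1 \mod 4$, the problem of representing an element as a sum of two squares in $\mathbb{Z}_{p^k}$ collapses into a purely multiplicative question, which is then trivial. The engine is the factorization $x^2 + y^2 = (x + iy)(x - iy)$, which becomes available as soon as the ring contains a square root of $-1$.

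First I would establish that $-1$ is a square modulo $p^k$. Because $p \equiv 1 \mod 4$, the element $-1$ is a quadratic residue modulo $p$. Since $p$ is odd, the polynomial $f(X) = X^2 + 1$ has derivative $f'(X) = 2X$, which is a unit at any root modulo $p$ (such a root is nonzero), so Hensel's lemma lifts a root modulo $p$ to an element $i \in \mathbb{Z}_{p^k}$ with $i^2 \equiv -1 \mod p^k$. I would also note that $2$ is a unit in $\mathbb{Z}_{p^k}$, again because $p$ is odd, and that $i$ is a unit since its square is the unit $-1$.

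Given an arbitrary target $a \in \mathbb{Z}_{p^k}$, I would then exhibit an explicit representation by setting $x = (a+1)\cdot 2^{-1}$ and $y = (a-1)\cdot(2i)^{-1}$. Using $i^2 = -1$, a direct computation gives $x^2 + y^2 = \tfrac{(a+1)^2 - (a-1)^2}{4} = a$. Hence every $a \in \mathbb{Z}_{p^k}$ lies in $S_{p^k}$, which yields $S_{p^k} = \mathbb{Z}_{p^k}$ and therefore $N_{p^k} = \emptyset$. Intuitively, the existence of $i$ makes $x^2 + y^2$ range over all \emph{products} $uv = (x+iy)(x-iy)$, and since $a = a \cdot 1$ is a product, every element is represented.

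The only genuine obstacle is the lifting step: one must ensure the square root of $-1$ survives from $\mathbb{Z}_p$ up to $\mathbb{Z}_{p^k}$. This is precisely where the hypotheses are used; $p$ being an odd prime supplies the simple-root condition for Hensel's lemma and the invertibility of $2$, while $p \equiv 1 \mod 4$ rather than $p \equiv 3 \mod 4$ is exactly what guarantees $-1$ is a residue to begin with. Once the root of $-1$ is secured, the representation is immediate and uniform in $k$, so no separate induction on $k$ is needed. An alternative approach would first prove the base case $S_p = \mathbb{Z}_p$ by a pigeonhole count on the $(p+1)/2$ squares and then lift, but the factorization argument handles all $k$ at once and is cleaner.
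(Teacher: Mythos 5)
Your proof is correct, but it follows a genuinely different route from the paper. The paper first proves $S_p \supseteq \{1,\dots,p-1\}$ by the pigeonhole count you mention in your last sentence (the sets $\{x - y^2\}$ and $\{z^2\}$ each have $\frac{p+1}{2}$ elements, so they intersect), then applies Hensel's lemma \emph{to each element separately} to lift a representation of any unit from $\mathbb{Z}_p$ to $\mathbb{Z}_{p^k}$, and finally handles elements divisible by $p$ by invoking Fermat's theorem that $p \equiv 1 \bmod 4$ is itself a sum of two squares, combined with the multiplicative (monoid) structure of $S_{p^k}$ via the Brahmagupta--Fibonacci identity. Your argument instead applies Hensel's lemma \emph{once}, to lift a root of $X^2+1$ (which exists mod $p$ exactly because $p \equiv 1 \bmod 4$), and then the explicit identity
$$
a = \left( \frac{a+1}{2} \right)^2 + \left( \frac{a-1}{2i} \right)^2, \qquad i^2 = -1,
$$
represents every element of $\mathbb{Z}_{p^k}$ uniformly --- units and multiples of $p$ alike --- with no case split, no appeal to Fermat's two-square theorem, and no use of the monoid lemma. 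What your approach buys is brevity and self-containedness: the whole theorem reduces to one quadratic-residue fact plus one lifting. What the paper's approach buys is reusability: its pigeonhole-plus-Hensel machinery makes no use of $p \equiv 1 \bmod 4$ when treating elements coprime to $p$, and the paper reuses exactly that argument in the $p \equiv 3 \bmod 4$ case (Theorem \ref{3mod4}), where your method is unavailable since $-1$ is not a square there. So the paper pays a little more in this theorem to amortize the cost across the next one.
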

\begin{proof}
We first use a pigeon-hole argument so show that $\{1, 2, ... , p-1 \} \in S_{p}$. Let $x \in \{1, 2, ... , p-1 \} $ and consider the two sets
$$
A = \{\, x - y^2 \mod p: y = 0, 1, 2, ... , \frac{p-1}{2} \, \} 
$$
and
$$
B = \{\, z^2 \mod p: z = 0, 1, 2, ... , \frac{p-1}{2} \, \} .
$$
Both $A$ and $B$ contain $\frac{p+1}{2}$ distinct elements. Since  $\, \abs{\mathbb{Z}_p} = p$, $A$ and $B$ must have a common element. So there is a $y$ and $z$ such that $x - y^2 = z^2 \mod p$ and $x \in S_{p}$. We can now use Hensel's lemma to show that any positive integer relatively prime to $p$ can be written as a sum of squares $\mod p^k$. For, if $x$ is relatively prime to $p$ then there exists a $y$ such that the polynomial
$$
f(z) = x - y^2 - z^2
$$
has a simple root mod $p$. By Hensel's lemma it then has a root in $\mathbb{Z}_{p^k}$. We next observe that since $p \equiv 1 \mod 4$, $p$ can be written as a sum of squares and by lemma~\ref{monoid} so can every power of $p$. Any element in $\mathbb{Z}_{p^k}$ can be written as $x \cdot p^s$ for some $s \geq 0$ and $x$ relatively prime to $p$. Lemma~\ref{monoid} then implies that all such numbers can be written as a sum of squares $\mod p^k$. 
\end{proof}

\bigskip
In order to treat the case where $p \equiv 3 \mod 4$ we need the following lemma.

\bigskip

\begin{lemma}
\label{3mod4lemma}
If $x^2 + y^2 \equiv 0 \mod p$ where $p \equiv 3 \mod 4$ then $x \equiv 0 \mod p$ and $y \equiv 0 \mod p$
\end{lemma}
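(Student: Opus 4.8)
The plan is to argue by contradiction, reducing the statement to the classical fact that $-1$ is not a quadratic residue modulo a prime $p \equiv 3 \mod 4$. First I would suppose the conclusion fails, so that at least one of $x, y$ is nonzero in $\mathbb{Z}_p$; by the symmetry of the hypothesis in $x$ and $y$, I may assume without loss of generality that $y \not\equiv 0 \mod p$. Since $p$ is prime, $\mathbb{Z}_p$ is a field, so $y$ is invertible. Rearranging $x^2 + y^2 \equiv 0 \mod p$ into $x^2 \equiv -y^2 \mod p$ and multiplying through by the inverse of $y^2$ yields $(x y^{-1})^2 \equiv -1 \mod p$, so that $-1$ would be a square modulo $p$.

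The key step is to rule this out, and it is the heart of the argument. Writing $w := x y^{-1}$, so that $w^2 \equiv -1 \mod p$, I would apply Fermat's little theorem: since $w \not\equiv 0 \mod p$, we have $w^{p-1} \equiv 1 \mod p$. On the other hand $w^{p-1} = (w^2)^{(p-1)/2} \equiv (-1)^{(p-1)/2} \mod p$. Because $p \equiv 3 \mod 4$, the exponent $(p-1)/2$ is odd, so $(-1)^{(p-1)/2} = -1$. Combining the two expressions gives $1 \equiv -1 \mod p$, hence $p \mid 2$, which is impossible for an odd prime. This is the one nontrivial point; everything surrounding it is routine field arithmetic.

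Having reached a contradiction, I conclude that $y \equiv 0 \mod p$. Substituting back into $x^2 + y^2 \equiv 0 \mod p$ leaves $x^2 \equiv 0 \mod p$, and since the field $\mathbb{Z}_p$ has no nonzero zero divisors this forces $x \equiv 0 \mod p$ as well, completing the proof. The main obstacle is purely the non-residuosity of $-1$ when $p \equiv 3 \mod 4$; once that is established via Fermat's little theorem, the symmetry between $x$ and $y$ together with the field structure of $\mathbb{Z}_p$ finishes the argument immediately.
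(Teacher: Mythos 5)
Your proof is correct, but it takes a genuinely different route from the paper. The paper lifts the congruence to the integers: after reducing $x$ and $y$ to the range $[0, \frac{p-1}{2}]$, it writes $x^2 + y^2 = c \cdot p$ with $c < p$, observes that $p$ then divides this number to an odd power, and invokes the global Fermat--Euler characterisation of sums of two squares in $\mathbb{N}$ (quoted in the introduction) to conclude $c = 0$ and hence $x = y = 0$. You instead work entirely inside the field $\mathbb{Z}_p$: assuming $y \not\equiv 0$, you produce an element $w = xy^{-1}$ with $w^2 \equiv -1 \bmod p$, and rule this out via Fermat's little theorem, since $(p-1)/2$ is odd when $p \equiv 3 \bmod 4$. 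Your argument is more elementary and self-contained, resting only on Fermat's little theorem rather than on the full two-squares theorem; this is arguably preferable, since the standard proof of the necessity direction of that theorem (that an odd power of a prime $\equiv 3 \bmod 4$ obstructs a two-squares representation) itself goes through precisely the non-residuosity of $-1$, so the paper's citation carries a whiff of circularity that your proof avoids. What the paper's approach buys is brevity: given that the characterisation in $\mathbb{N}$ has already been stated, the descent to integers takes only a few lines. One cosmetic point in your write-up: when you apply Fermat's little theorem you assert $w \not\equiv 0 \bmod p$; this deserves the one-line justification that $w^2 \equiv -1 \not\equiv 0$ because $p$ is odd.
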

\begin{proof}
We may assume that $0 \leq x, y  \leq \frac{p-1}{2}$ by replacing $x$ with $x \mod p$ and then $p-x$ if necessary and the same for y. Then $x^2 + y^2 = c \cdot p$ where $c \leq \frac{p-1}{2}$. But the power of $p$ in $c \cdot p$ is odd so $c \cdot p$ cannot be represented as a sum of squares (see section \ref{intro}). Therefore both $x$ and $y$ must be $0$. 
\end{proof}

\bigskip

\begin{theorem}
\label{3mod4}
Let $p$ be a prime with $p \equiv 3 \mod 4$ and $k \geq 1$. Then
$$
S_{p^k} = \{ 0 \} \cup \{ t \cdot p^{2s} : s \in \mathbb{N}, \, \, t \ne 0 \mod p \}
$$
and
$$
N_{p^k} = \{ t \cdot p^{2s + 1} : s \in \mathbb{N}, \, \, t  \ne 0 \mod p \}.
$$
\end{theorem}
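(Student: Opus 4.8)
The plan is to classify the residues of $\mathbb{Z}_{p^k}$ by their $p$-adic valuation. Every nonzero element can be written uniquely as $t \cdot p^j$ with $0 \le j \le k-1$ and $t \not\equiv 0 \bmod p$, so that $\mathbb{Z}_{p^k}$ is the disjoint union of $\{0\}$, the even-exponent residues $\{t p^{2s}\}$, and the odd-exponent residues $\{t p^{2s+1}\}$. Because $S_{p^k}$ and $N_{p^k}$ are complementary in $\mathbb{Z}_{p^k}$, it is enough to prove the two inclusions $\{0\} \cup \{t p^{2s}\} \subseteq S_{p^k}$ and $\{t p^{2s+1}\} \subseteq N_{p^k}$; the two displayed identities then follow at once.

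For the first inclusion I first note that the pigeon-hole argument used in Theorem~\ref{1mod4} never invoked $p \equiv 1 \bmod 4$, so it still shows that every nonzero residue of $\mathbb{Z}_p$ is a sum of two squares. Given a unit $u$ with $u \equiv y^2 + z^2 \bmod p$, at least one summand, say $z$, is a unit, so $z$ is a simple root of $f(w) = u - y^2 - w^2$ (since $f'(z) = -2z \not\equiv 0 \bmod p$); Hensel's lemma then lifts the representation and every $t \not\equiv 0 \bmod p$ lies in $S_{p^k}$. Since $p^{2s} = (p^s)^2 + 0^2$ is a square it also lies in $S_{p^k}$, and the monoid property (Lemma~\ref{monoid}) gives $t \cdot p^{2s} \in S_{p^k}$. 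Together with $0 = 0^2 + 0^2$ this settles the first inclusion.

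The second inclusion is the crux, and I expect the valuation bookkeeping to be the main obstacle; it is the analogue of the descent carried out in Theorem~\ref{2k}. Suppose, for contradiction, that $t p^{2s+1} \equiv x^2 + y^2 \bmod p^k$ with $t$ a unit and $2s+1 \le k-1$, so that the element is genuinely nonzero. Reducing modulo $p$ gives $x^2 + y^2 \equiv 0 \bmod p$, so Lemma~\ref{3mod4lemma} forces $p \mid x$ and $p \mid y$; writing $x = p x'$, $y = p y'$ and dividing by $p^2$ yields $x'^2 + y'^2 \equiv t p^{2s-1} \bmod p^{k-2}$. The point to verify is that the inequality $2s+1 \le k-1$ is preserved under $(s,k) \mapsto (s-1, k-2)$, so that the step may be iterated exactly $s$ times, arriving at $a^2 + b^2 \equiv t p \bmod p^{k-2s}$ with $k - 2s \ge 2$. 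A final application of Lemma~\ref{3mod4lemma} shows $p \mid a$ and $p \mid b$, so the left side is $\equiv 0 \bmod p^2$; reducing the congruence modulo $p^2$ gives $t p \equiv 0 \bmod p^2$, i.e. $t \equiv 0 \bmod p$, contradicting that $t$ is a unit.

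I would also record the one-line alternative that bypasses the descent: lifting $x, y$ to integers, the hypothesis makes $x^2 + y^2$ a positive integer equal to $p^{2s+1}\bigl(t + M p^{k-2s-1}\bigr)$ for some $M \ge 0$, and since $k - 2s - 1 \ge 1$ the second factor is a unit, so $v_p(x^2+y^2) = 2s+1$ is odd. This contradicts the classical fact recalled in Section~\ref{intro} that a sum of two squares in $\mathbb{N}$ has even $p$-valuation whenever $p \equiv 3 \bmod 4$. Either way the obstruction is the same phenomenon packaged by Lemma~\ref{3mod4lemma}, and combining the two inclusions completes the proof.
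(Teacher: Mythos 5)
Your proof is correct and follows essentially the same route as the paper: the pigeon-hole plus Hensel argument for residues prime to $p$, Lemma~\ref{monoid} to handle the factors $p^{2s}$, and the divide-by-$p^2$ descent via Lemma~\ref{3mod4lemma} to rule out the odd-valuation residues. The extras you supply (the simple-root justification for Hensel's lemma, the check that the invariant $2s+1 \le k-1$ is preserved under the descent, and the alternative argument via the parity of the $p$-adic valuation of an integer sum of two squares) are correct refinements of the same approach rather than a different one.
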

\begin{proof}
Firstly, the pigeon-hole/descent argument from theorem~\ref{1mod4} can be used again to show that all numbers relatively prime to $p$ are in $S_{p^k}$. Next we look at multiples of $p$. Even powers of $p$ (including $1$) are squares and so are in $S_{p^k}$. Therefore lemma~\ref{monoid} implies that all numbers of the form $t~\cdot~p^{2s}$ where $s \in \mathbb{N}$ and $t$ is relatively prime to $p$ are in $S_{p^k}$. Let $n = t~\cdot~p^{2s+1}$ where $s \in \mathbb{N}$ with $2s+1 \leq k-1$ and t is relatively prime to $p$. If $n \in S_{p^k}$ then 
$$
t \cdot p^{2s + 1} = x^2 + y^2 + c \cdot p^{k}  
$$
for some $x, y, c$. We deduce that $x^2 + y^2 \equiv 0 \mod p$ and lemma~\ref{3mod4lemma} then implies that both $x$ and $y$ are $\equiv 0 \mod p$. Dividing through by $p^2$ we have
$$
t \cdot p^{2s - 1} = x_1^2 + y_1^2 + c \cdot p^{k-2}.  
$$
Applying the same argument repeatedly we descend to the condition
$$
t \cdot p = a^2 + b^2 + c \cdot p^{k-2s}  
$$
for some $a$ and $b$. Again $a$ and $b$ are $\equiv 0 \mod p$ but this creates an impossibility as the right hand side is divisible by $p^2$ as $k-2s \geq 2$ but the left hand side is only divisible by $p$. We deduce that all numbers of the form $t \cdot p^{2s + 1}$ are in $N_{p^k}$.
\end{proof}

\bigskip

\begin{corollary}
\label{3modcor}
Let $p \equiv 3 \mod 4$ be prime and $k \geq 1$. Then
$$
\abs{N_{p^k}} = 
\begin{cases}
    \frac{1}{p+1} (\, p^k - 1 \, ) , & \text{if } \, \, k \text{ is even} \\
    \frac{1}{p+1} (\, p^k - p \, ),   & \text{if} \, \, k \text{ is odd}.
\end{cases}
$$
\end{corollary}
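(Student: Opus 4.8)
The plan is to count the elements of $N_{p^k}$ directly from its explicit description in Theorem~\ref{3mod4}, namely $N_{p^k} = \{\, t \cdot p^{2s+1} : s \in \mathbb{N}, \, t \ne 0 \bmod p \,\}$, and then evaluate a geometric series. The key observation is that every nonzero $m \in \mathbb{Z}_{p^k}$ has a unique $p$-adic valuation $j = v_p(m)$ with $0 \le j \le k-1$, and the number of elements of $\mathbb{Z}_{p^k}$ of valuation exactly $j$ is $\phi(p^{k-j}) = p^{k-j-1}(p-1)$, since the map $m \mapsto m/p^j \bmod p^{k-j}$ is a bijection onto the units of $\mathbb{Z}_{p^{k-j}}$. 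Because $N_{p^k}$ is precisely the set of elements whose valuation is odd (the odd exponents $2s+1$ running from $1$ up to $k-1$), I obtain
$$
\abs{N_{p^k}} = \sum_{\substack{1 \le j \le k-1 \\ j \text{ odd}}} p^{\,k-j-1}(p-1).
$$

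First I would record the count $\phi(p^{k-j})$ for each valuation and sanity-check it against $\sum_{j=0}^{k-1}\phi(p^{k-j}) = p^k - 1$, the total number of nonzero residues. Then I would split into the two parity cases for $k$. When $k = 2\ell$ is even, the odd values $j = 1, 3, \dots, 2\ell-1$ give exponents $k-j-1 = k-2, k-4, \dots, 0$, i.e.\ the even integers from $0$ to $k-2$, so the sum is $(p-1)\sum_{i=0}^{\ell-1} p^{2i} = (p-1)\,\frac{p^k-1}{p^2-1}$, and the factorisation $p^2-1 = (p-1)(p+1)$ collapses this to $\frac{p^k-1}{p+1}$. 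When $k = 2\ell+1$ is odd, the same values of $j$ give the odd exponents $1, 3, \dots, k-2$, so the sum is $(p-1)\,p\,\frac{p^{k-1}-1}{p^2-1} = \frac{p^k-p}{p+1}$, which is exactly the claimed value.

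The computation is entirely routine, so there is no genuine obstacle; the only points needing care are correctly pinning down the range and parity of the exponents in each case and exploiting the cancellation of $(p-1)$. As a sanity check I would confirm the base values $k=1$ (no admissible odd $j$, so $\abs{N_p}=0$) and $k=2$ (only $j=1$, giving $\abs{N_{p^2}}=p-1$) directly against the formula. An alternative route that avoids the geometric series is to establish the recursion $\abs{N_{p^k}} = p^{k-2}(p-1) + \abs{N_{p^{k-2}}}$, obtained by separating the residues of valuation exactly $1$ from those of higher odd valuation (the latter being $p^2$ times the elements of $N_{p^{k-2}}$), and then solve it by induction using the two base cases above.
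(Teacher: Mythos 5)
Your proposal is correct and follows essentially the same route as the paper: both count the elements of $N_{p^k}$ from Theorem~\ref{3mod4} by grouping them according to their (odd) $p$-adic valuation, identify the count at each valuation with $\abs{\mathbb{Z}^*_{p^{k-j}}} = \phi(p^{k-j})$, and evaluate the resulting geometric series in the two parity cases. Your version is slightly more complete in that you work out the odd-$k$ case explicitly (the paper only says it is ``similar'') and add sanity checks, but the underlying argument is the same.
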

\begin{proof}
We use theorem \ref{3mod4} to count the elements of $N_{p^k}$. We have
\begin{align*}
\abs{N_{p^k}}  & =  \sum_{s=0}^{\lfloor \frac{k-2}{2} \rfloor}  \abs{ \{ t: t \cdot p^{2s+1} < p^k, t \ne 0 \mod p \} } \\
= & \sum_{s=0}^{\lfloor \frac{k-2}{2} \rfloor }  \abs{ \{ t: t < p^{k - 2s - 1}, t \ne 0 \mod p \} } \\
= & \sum_{s=0}^{\lfloor \frac{k-2}{2} \rfloor } \abs{    \mathbb{Z}_{p^{k-2s-1}} ^* }
\end{align*}
where  $\mathbb{Z}_{p^{k-2s-1}} ^*$ is the set of invertible elements in $\mathbb{Z}_{p^{k-2s-1}}$.  Since $\abs{  \mathbb{Z}_{p^r} ^*} = p^r - p^{r-1}$, when $k$ is even the above sum  reduces to
$$
\sum_{s=0}^{\frac{k-2}{2}} (p^{2s + 1} - p^{2 s} ) 
$$
$$
=  (p-1) \sum_{s=0}^{\frac{k-2}{2}} (p^{2s} )		
$$
$$
= (p - 1) \frac{ (p^2)^{k/2} - 1}{p^2 - 1} 
$$
$$
= \frac{1}{p+1} (p^k -1).
$$
The calculation for $k$ odd is similar.
 \end{proof}

\bigskip

We can now look at the general situation. The following theorem was proved in \cite{Harrington:2014aa} (theorem 4.1).

\bigskip

\begin{theorem}
Let $ \, \, n \in \mathbb{N}$. Then $ \, \, S_n = \mathbb{Z}_n \, \, $ if and only if the following condition holds:
$$
\mbox{if $ \, n \equiv 0 \mod p^2 \, $ where $ \, p$ is prime, then $ \, p \equiv 1 \mod 4$}.
$$
\end{theorem}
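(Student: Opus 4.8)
The plan is to reduce the problem to prime powers via the Chinese remainder theorem and then read off the answer from the three structural theorems already established. Write $n = \prod_{i=1}^k p_i^{k_i}$ as a product of powers of distinct primes. Since the $p_i^{k_i}$ are pairwise relatively prime, iterating Lemma \ref{relprime} gives that $x \in S_n$ exactly when $x \mod p_i^{k_i} \in S_{p_i^{k_i}}$ for every $i$; and because the reduction map $\mathbb{Z}_n \to \prod_i \mathbb{Z}_{p_i^{k_i}}$ is a bijection, every tuple of residues in the factors is attained. Hence $S_n = \mathbb{Z}_n$ if and only if $S_{p_i^{k_i}} = \mathbb{Z}_{p_i^{k_i}}$ for each $i$. (Equivalently, one may argue through densities: by Corollary \ref{cor}, $r(S_n) = \prod_i r(S_{p_i^{k_i}})$, so $r(S_n) = 1$ precisely when every factor equals $1$.)

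First I would settle exactly when $S_{p^k} = \mathbb{Z}_{p^k}$ for a single prime power, splitting into the three cases governed by $p \mod 4$. If $p \equiv 1 \mod 4$, Theorem \ref{1mod4} gives $S_{p^k} = \mathbb{Z}_{p^k}$ for every $k \geq 1$. If $p = 2$, Theorem \ref{2k} gives $\abs{N_{2^k}} = 2^{k-1} - 1 > 0$ for $k \geq 2$, so $S_{2^k} \neq \mathbb{Z}_{2^k}$; whereas for $k = 1$ one checks directly that $S_2 = \{0,1\} = \mathbb{Z}_2$. If $p \equiv 3 \mod 4$, Corollary \ref{3modcor} gives $\abs{N_{p^k}} > 0$ for every $k \geq 2$, so $S_{p^k} \neq \mathbb{Z}_{p^k}$; while for $k = 1$ the same corollary yields $\abs{N_p} = \frac{1}{p+1}(p - p) = 0$, so $S_p = \mathbb{Z}_p$.

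Collecting these cases, $S_{p^k} = \mathbb{Z}_{p^k}$ fails precisely when $k \geq 2$ and $p \not\equiv 1 \mod 4$, and holds in all other cases. Equivalently, writing $p^{k}$ for the exact power of $p$ dividing $n$, the factor $S_{p^k}$ equals $\mathbb{Z}_{p^k}$ unless $p^2 \mid n$ and $p \not\equiv 1 \mod 4$. Combining with the reduction step, $S_n = \mathbb{Z}_n$ holds if and only if no prime $p$ with $p^2 \mid n$ satisfies $p \not\equiv 1 \mod 4$, which is exactly the stated condition. The case $n = 1$ is vacuous.

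The content here is entirely assembled from the earlier theorems, so there is little genuine obstacle; the one point that requires care is the boundary case $k = 1$ for $p = 2$ and for $p \equiv 3 \mod 4$. These are precisely the situations where $p \not\equiv 1 \mod 4$ yet still $S_{p} = \mathbb{Z}_{p}$, and they are the reason the hypothesis is phrased in terms of $p^2 \mid n$ rather than $p \mid n$. Keeping this distinction straight --- that a single factor of a ``bad'' prime is harmless but a square factor is fatal --- is the crux of matching the case analysis to the statement.
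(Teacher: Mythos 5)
Your proof is correct and follows essentially the same route as the paper: reduce to prime powers via the Chinese remainder theorem (equivalently, the density formula of Corollary \ref{cor}) and then invoke Theorems \ref{2k}, \ref{1mod4} and \ref{3mod4} to decide each factor. In fact you are somewhat more careful than the paper's own proof, which leaves the boundary cases $S_2 = \mathbb{Z}_2$ and $S_p = \mathbb{Z}_p$ for $p \equiv 3 \mod 4$ implicit, whereas you verify them explicitly.
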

\begin{proof}
The positive integer $n$ can be written as a product of primes
$$
n = \prod_{p_i} p_i^{\alpha_i}.
$$
and so from corollary \ref{cor}
$$
r(S_n) = \prod_{p_i} r(S_{p_i^{\alpha_i}}).
$$
Now $S_n = \mathbb{Z}_n \, \, $ if and only if $r(S_n) = 1$. Therefore we must have $r(S_{p_i^{\alpha_i}}) = 1$ for each prime $p_i$ dividing $n$. From theorems \ref{2k}, \ref{1mod4} and \ref{3mod4} we have that $r(S_{2^i}) <  1$ for $i  > 1$ and $r(S_{p^i}) < 1$ for primes $p \equiv 3 \mod 4$ when $ i > 1$. We also have that $r(S_{p^i}) = 1$ for all primes $p \equiv 1\mod 4$ and all $i$. The result follows.
\end{proof}

\bigskip

We note that Harrington, Jones and Lamarche \cite{Harrington:2014aa} also investigated which positive integers $z$ can be written as the sum of two {\bf nonzero} squares modulo a positive integer $n$. They obtained a complete characterisation of those $n$ such that every $z \in \mathbb{Z}_n$ can be written as the sum of two nonzero squares in $\mathbb{Z}_n$.

\bigskip

\section{Some properties of $S_n$ and $N_n$}
\label{properties}

Since $\abs{S_n} = n - \abs{N_n}$, the properties of $S_n$ and $N_n$ are connected. We investigate the ratios $r(S_n)$ and $r(N_n)$ which satisfy $r(S_n) = 1 - r(N_n)$. The limit of these ratios as $n \to \infty$ does not exist. In fact we have:
\bigskip

\begin{theorem}
$$\liminf_{n \to \infty} r(S_n) = \liminf_{n \to \infty} r(N_n) = 0
$$
and
$$
\limsup_{n \to \infty} r(S_n) = \limsup_{n \to \infty} r(N_n) = 1.
$$
\end{theorem}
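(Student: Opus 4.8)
The plan is to reduce everything to the behaviour of $r(N_n)$. Since $r(S_n) = 1 - r(N_n)$, we have $\liminf_n r(S_n) = 1 - \limsup_n r(N_n)$ and $\limsup_n r(S_n) = 1 - \liminf_n r(N_n)$, so all four assertions are equivalent to the single pair of statements $\liminf_n r(N_n) = 0$ and $\limsup_n r(N_n) = 1$. Because $0 \le r(N_n) \le 1$ for every $n$, it suffices to exhibit one sequence $n \to \infty$ along which $r(N_n) \to 0$ and another along which $r(N_n) \to 1$.

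For the first sequence I would take $n = 5^k$ (any fixed prime $p \equiv 1 \bmod 4$ raised to the $k$-th power would do) and let $k \to \infty$. By Theorem~\ref{1mod4}, $N_{5^k} = \emptyset$, so $r(N_{5^k}) = 0$ for every $k$. This gives $\liminf_n r(N_n) = 0$, and hence $\limsup_n r(S_n) = 1$, at once.

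For the second sequence I would take $n = n_X := \prod_{p \le X,\ p \equiv 3 \bmod 4} p^2$ and let $X \to \infty$, so that $n_X \to \infty$. By Corollary~\ref{cor} we have $r(S_{n_X}) = \prod_{p \le X,\ p \equiv 3 \bmod 4} r(S_{p^2})$, and by Corollary~\ref{3modcor} with $k = 2$ we have $\abs{N_{p^2}} = \tfrac{1}{p+1}(p^2 - 1) = p - 1$, so $r(S_{p^2}) = 1 - \tfrac{p-1}{p^2}$. It then remains to show this product tends to $0$. Taking logarithms and using $\log(1-x) \le -x$, the product is bounded above by $\exp\bigl(-\sum_{p \le X,\ p \equiv 3 \bmod 4} \tfrac{p-1}{p^2}\bigr)$; writing $\tfrac{p-1}{p^2} = \tfrac1p - \tfrac1{p^2}$ and noting that $\sum_p p^{-2}$ converges, the exponent diverges to $-\infty$ as soon as $\sum_{p \equiv 3 \bmod 4} \tfrac1p = \infty$. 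Hence $r(S_{n_X}) \to 0$, which gives $\liminf_n r(S_n) = 0$ and equivalently $\limsup_n r(N_n) = 1$, completing the argument.

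The hard part is this last input, the divergence of $\sum_{p \equiv 3 \bmod 4} 1/p$. It is genuinely needed: the per-prime defect satisfies $r(S_{p^2}) = 1 - (p-1)/p^2 \to 1$, so the product can reach $0$ only by accumulating contributions from infinitely many primes $\equiv 3 \bmod 4$ whose reciprocals diverge. No finite set of primes would work, and neither would the powers of a single prime, since $r(S_{p^k}) \to p/(p+1) > 0$ by Corollary~\ref{3modcor}. I would therefore invoke the divergence of this reciprocal sum as a standard consequence of Dirichlet's theorem on primes in arithmetic progressions (equivalently, of Mertens' theorem for arithmetic progressions).
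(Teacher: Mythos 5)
Your proof is correct, and it rests on exactly the same ingredients as the paper's: Theorem~\ref{1mod4} for the sequence $5^k$, the multiplicativity in Corollary~\ref{cor}, the count in Corollary~\ref{3modcor}, and the divergence of $\sum_{p \equiv 3 \bmod 4} \frac{1}{p}$ from the strong form of Dirichlet's theorem. The difference is in the sequence used to witness $\limsup_{n \to \infty} r(N_n) = 1$. The paper fixes the first $i$ primes $p_1, \dots, p_i \equiv 3 \bmod 4$, forms $n(i,s) = \prod_{k=1}^{i} p_k^s$, and takes a double limit: first $s \to \infty$, so that each factor attains its supremum $r(N_{p_k^s}) \to \frac{1}{1+p_k}$, and then $i \to \infty$, showing the resulting infinite product $\prod_k \bigl(1 - \frac{1}{1+p_k}\bigr)$ vanishes because its reciprocal $\prod_{p \equiv 3 \bmod 4} \bigl(1 + \frac{1}{p}\bigr)$ diverges by comparison with $\sum_{p \equiv 3 \bmod 4} \frac{1}{p}$. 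You instead fix the exponent at $2$ and grow the set of primes, taking $n_X = \prod_{p \le X,\, p \equiv 3 \bmod 4} p^2$; each factor contributes only $r(N_{p^2}) = \frac{p-1}{p^2}$, which is smaller than $\frac{1}{1+p}$, but the estimate $\log(1-x) \le -x$ together with the same Dirichlet input and the convergence of $\sum_p p^{-2}$ still forces $\prod_p r(S_{p^2}) \to 0$. What your route buys is a single-parameter sequence with an explicit quantitative bound, avoiding both the double limit and the reciprocal-product argument; what the paper's route buys is that each local factor is pushed to its extremal density $\frac{1}{p+1}$, which ties in naturally with the discussion in section~\ref{powers} of choosing moduli that maximise $r(N_n)$. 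Your observation that exponent $2$ is the minimal choice making $N_{p^k}$ nonempty, and that no finite set of primes (nor powers of a single prime, since $r(N_{p^k}) \to \frac{1}{p+1} < 1$) could suffice, is also accurate and correctly isolates why the arithmetic-progression input is indispensable in either version.
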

\begin{proof}
To show $\limsup_{n \to \infty} r(S_n) = 1$ we take $n_k = 5^k$. Then, since $n_k \equiv 1 \mod 4$, $r(S_{n_k}) = 1$ for all $k$.
To show $\limsup_{n \to \infty} r(N_n) = 1$, we consider primes that are congruent to $\, 3~\mod~4$. Let $p_k$ be the $k \, $th prime that is congruent to $ \,\, 3 \mod 4$ and form the product 
$$
n(i, s) = \prod_{k = 1}^{i} p_k^s
$$
where $s \in \mathbb{N}$. Then since \mbox{$\,\, \lim_{s \to \infty} r(N_{p_k^s}) = \frac{1}{1+p_k} \,\, $} from corollary \ref{3modcor}, we have
$$
\lim_{s \to \infty} r(N_{n(i, s)}) = 1 - \prod_{k = 1}^{i} (1 - \frac{1}{1+p_k}) 
$$ 
from corollary \ref{cor}. Therefore
$$
\limsup_{n \to \infty} r(N_n) \geq 1 - \prod_{k = 1}^{\infty} (1 - \frac{1}{1+p_k}).
$$	
The later product is $0$ as its reciprocal is
$$
\prod_{p \equiv 3 \mod 4} (1 + \frac{1}{p})
$$
which diverges by comparison with 
$$
\sum_{p \equiv 3 \mod 4} \frac{1}{p}
$$
by the strong form of Dirichlet's theorem.
\end{proof}

\bigskip

\section{Sums of two squares and powers of $2$ in $\mathbb{Z}_n$}
\label{powers}

Crocker \cite{Crocker:2008id} investigated the representation of integers as the sum of two squares and up to two non-negative integral powers of $2$. He first showed that if $t \equiv 0 \mod 36$ and $t$ cannot be written as the sum of two squares and up to two powers of $2$, then neither can $2^{\alpha} \cdot t$ for any $\alpha \in \mathbb{N}$. He then created a system of congruences to show that there are at least $94$ numbers $t < 2^{1417}$ which are $\,\, \equiv 0 \mod 36$ and which cannot be written as the sum of two squares and up to two powers of $2$. As a result, it follows that the asymptotic density of numbers which cannot be written as the sum of two squares and up to two powers of two is bounded below by \mbox{$\,\, 94 \cdot \frac{\log n}{n}$}. In fact the constant $94$ can be made larger as stated in \cite{Crocker:2017}.

Using some of Crocker's ideas, Platt and Trudgian \cite{Platt:2016aa} also studied the problem and showed that if $\, \, N_0 = 1 151 121 374 334 \,\,$ and $\,\, \alpha \geq 0 \,\,$, no number of the form $2^{\alpha} \cdot N_0$ can be written as the sum of $2$ squares and up to two powers of $2$. They also showed that the smallest positive integer which cannot be written as a sum of two squares and up to two powers of 2 is 535903.

In \cite{Platt:2016aa} Platt and Trudgian also discussed numbers which can be written as the sum of two squares and up to one power of $2$. At the end of their paper they asked whether it is possible to obtain good estimates on the density of numbers that can and cannot be represented as the sum of two squares and up to one power of $2$. They showed that numbers of the form $23 + 72k; \, \, k \in \mathbb{N}$ cannot be written in that form. The density of this set of numbers is therefore bounded below by $\frac{1}{72}$. By working with modular arithmetic in a similar way to Crocker and Platt and Trudgian we can show that a larger density of numbers are not able to be written as a the sum of two squares and up to one power of $2$. 

Following the idea above we work modulo $n = 2^k \cdot m$, where $m$ is odd, and try to choose $k$ and $m$ to maximise the density of the set $A$ of numbers $x$ in $\mathbb{Z}_n$ satisfying the following conditions

\bigskip

\begin{enumerate}
\item $x \mod 2^k \in N_{2^k}$
\item $x - 2^i \in N_n \,\, $ for $i = 0, 1, ... , k-1$
\end{enumerate}

\bigskip

The first condition ensures that numbers of the form $x \mod n$ and $x - 2^j \mod n$ for $j \geq k$ cannot be written as a sum of squares in $\mathbb{Z}$ as they are in $N_{2^k}$. The second condition ensures that numbers of the form $x - 2^i \mod n$ for $i = 0, 1, ... , k-1$ cannot be written as a sum of squares in $\mathbb{Z}$ as they are in $N_n$. When $x$ satisfies both conditions we know that numbers congruent to $x \mod n$ cannot be written as the sum of two squares or as the sum of two squares and a power of $2$ in $\mathbb{Z}$. Therefore, at least $\frac{\abs{A}}{n}$ of numbers cannot be written as a sum of squares and up to one power of $2$ in $\mathbb{Z}$. We would like to choose $k$ and $m$ to maximise $\frac{\abs{A}}{n}$. For any $k$ we know from theorem \ref{2k} that the number of $x \in \mathbb{Z}_{2^k}$ satisfying the first condition is $2^{k-1}-1$ so the number of $x \in \mathbb{Z}_n$ satisfying the first condition is $m \cdot (2^{k-1}-1)$. The density of such numbers in $Z_n$ is therefore $\frac{1}{2} - 2^{-k}$. The most that this approach can do then is to show that at least \mbox{$\frac{1}{2} - \epsilon$} of numbers cannot be written as a sum of two squares and up to one power of $2$ (if in fact the actual density is that high). There does not seem to be any way to connect the choice of $k$ in the first condition with the choice of $m$ to maximise the density of numbers satisfying the second condition as well. One possible approach is to choose $k$ and $m$ to maximise the density of $N_n$ in $\mathbb{Z}_n$ in the hope that this will make it more likely that the $k$ numbers $x - 2^i$ hit $N_n$ for each $x$ satisfying the first condition. In choosing $m$ we only need to consider numbers which are not divisible by any primes $\,\, \equiv 1 \mod 4$. This is because primes that are $\,\, \equiv 1 \mod 4 \,\,$ do not change the density of $N_n$, i.e. if $p \equiv 1 \mod 4$ then the density of $N_{pn}$ in $\mathbb{Z}_{pn}$ is the same as the density of $N_n$ in $\mathbb{Z}_n$ by equation (\ref{rmns}) and theorem \ref{1mod4}. We also only need to consider values of $m$ which are square-full, i.e. if a prime $p$ divides $m$ then $p^2$ also divides $m$. This is because $N_p =  \emptyset$ for all primes $p$. For primes $p \equiv 3 \mod 4$ we have
$$
r(N_{p^{\alpha}}) = \frac{1}{p+1} ( \,\, 1 - c p^{-\alpha} \, \, )   \sim  \frac{1}{p+1} \,\, \text{ for large } \,\, \alpha
$$
where $c$ is either $1$ or $p$. $r(N_{p^{\alpha}})$ increases as $\alpha$ increases but reduces as $p$ increases and approaches a maximum of $\frac{1}{p+1}$ as $\,\, \alpha \to \infty$. A reasonable search method for finding good values of $k$ and $m$ is to start with values of $m$ which are products of small primes $\equiv 3 \mod 4$ raised to the power $2$ and gradually raise the powers of the primes and add more primes. Since we are working in $\mathbb{Z}_n$ we can use theorems \ref{2k}, \ref{1mod4} and \ref{3mod4} to avoid having to calculate the elements of $N_{2^k}$ and $N_n$ manually. This of course saves a good deal of computing time. Our approach instead was to perform a linear search over values of $n$ up to $10$ million, skipping those $n$ divisible by a prime congruent to $1 \mod 4$ or by any prime $p$ such that $p^2  \nmid n$. In this range, the values of $k$ and $m$ which maximise the density of numbers in $\mathbb{Z}_n$ satisfying the two conditions above are $k = 7$ and $m = 3^2 \cdot 7^2 \cdot 11^2 = 53361$ so $n = 6830208$. For this value of $n$ there are $828139$ numbers $\mod n$ which satisfy both conditions. For each of these $828139$ numbers $x$ we know that numbers of the form $x \mod 6830208$ cannot be written as a sum of squares plus up to one power of $2$. Therefore, {\bf at least $12.125\%$ of numbers cannot be written as a sum of squares plus up to one power of $2$.}

The same approach can be used when investigating which numbers can be represented by a sum of squares and up to two powers of $2$. The two conditions mentioned above need to be adjusted to the new situation. Let $n = 2^k \cdot m$ where $m$ is odd.We aim to choose $k$ and $m$ to maximise the density of the set $A$ of numbers $x$ in $\mathbb{Z}_n$ satisfying the following conditions

\bigskip

\begin{enumerate}
\item $x \mod 2^k \in N_{2^k}$
\item $x - 2^i \in N_n \,\, $ for $i = 0, 1, ... , k-1$
\item $x - 2^i - 2^j  \in N_n \,\, $ for $i = 0, 1, ... , k-1 \,\, $ and $j = i+1, i+2, ... , ord_{n/2^k} (\, 2 \, ).$
\end{enumerate}

\bigskip

Here, $\, ord_{n/2^k} (\, 2 \, ) \,$ is the order of the integer $2$ in the group $\, \mathbb{Z}^*_{n/2^k} \,$. The first two conditions again ensure that numbers of the form $x \mod n$ cannot be written as the sum of two squares or as the sum of two squares and a power of $2$ in $\mathbb{Z}$. The third condition then implies $\, x \mod n$ cannot be written as the sum of two squares and two powers of $2$. The set of numbers $A$ in $\mathbb{Z}_n$ satisfying the three conditions above provides a lower bound on the density of numbers which cannot be written as the sum of two squares and up to two powers of $2$. We tested the first million values of $n$ but found that the set $A$ was empty for each of these $n$. If the asymptotic density of numbers in $\mathbb{Z}$ that cannot be written as the sum of two squares and up to two powers of $2$ is zero (which would be the case if it took the form $\BigO{\frac{\log n}{n}}$ for example) then the set $A$ satisfying the three conditions above is empty for all $n \in \mathbb{N}$.

\bigskip

\bibliographystyle{plain}
\begin{small}
\bibliography{ref}
\end{small}

\end{document}